\providecommand{\U}[1]{\protect\rule{.1in}{.1in}}
\newtheorem{theorem}{Theorem}
\newtheorem{corollary}[theorem]{Corollary}
\newtheorem{definition}[theorem]{Definition}
\newtheorem{example}[theorem]{Example}
\newtheorem{lemma}[theorem]{Lemma}
\newtheorem{proposition}[theorem]{Proposition}
\newtheorem{remark}[theorem]{Remark}
\newenvironment{proof}[1][Proof]{\noindent\textbf{#1.} }{\ \rule{0.5em}{0.5em}}
\begin{document}

\title{Unification of $q-$exponential function and related $q-$numbers and polynomials}
\author{N. I. Mahmudov and M. Momenzadeh\\Eastern Mediterranean University\\Gazimagusa, TRNC, Mersin 10, Turkey\\Near East University\\Nicosia, TRNC, Mersin 10, Turkey \\Email: nazim.mahmudov@emu.edu.tr\\\ \ \ \ \ \ \ \ \ mohammad.momenzadeh@neu.edu.tr}
\maketitle

\begin{abstract}
The main purpose of this paper is to introduce and investigate a class of
generalized Bernoulli polynomials and Euler polynomials based on the
generating function. we unify all forms of $q-$exponential functions by one
more parameter. we study some conditions on this parameter to related this to
some classical results for $q$-Bernoulli numbers and polynomials.

\end{abstract}

\section{Introduction}

\bigskip In combinatorial mathematics, a $q$-exponential is a $q$-analog of
the exponential function, namely the eigenfunction of a $q$-derivative. There
are many $q$-derivatives, for example, the classical $q$-derivative, the
Askey-Wilson operator, etc. \cite{eXton}.Therefore, unlike the classical
exponentials, q-exponentials are not unique. In the standard approach to the
$q-$calculus, two exponential function are used.These $q-$exponentials are
defined by%

\begin{align*}
e_{q}\left(  z\right)   &  =\sum_{n=0}^{\infty}\frac{z^{n}}{\left[  n\right]
_{q}!}=\prod_{k=0}^{\infty}\frac{1}{\left(  1-\left(  1-q\right)
q^{k}z\right)  },\ \ \ 0<\left\vert q\right\vert <1,\ \left\vert z\right\vert
<\frac{1}{\left\vert 1-q\right\vert },\ \ \ \ \ \ \ \\
E_{q}(z)  &  =e_{1/q}\left(  z\right)  =\sum_{n=0}^{\infty}\frac{q^{\frac
{1}{2}n\left(  n-1\right)  }z^{n}}{\left[  n\right]  _{q}!}=\prod
_{k=0}^{\infty}\left(  1+\left(  1-q\right)  q^{k}z\right)
,\ \ \ \ \ \ \ 0<\left\vert q\right\vert <1,\ z\in\mathbb{C},
\end{align*}
In addition, The improved $q$-exponential function is defined by
\cite{improved}%

\[
\mathcal{E}_{q}\left(  z\right)  =e_{q}\left(  \frac{z}{2}\right)  E_{q}%
(\frac{z}{2})=\sum_{n=0}^{\infty}\frac{z^{n}}{\left[  n\right]  _{q}!}%
\frac{\left(  -1,q\right)  _{n}}{2^{n}}=\prod_{k=0}^{\infty}\frac{\left(
1+\left(  1-q\right)  q^{k}\frac{z}{2}\right)  }{\left(  1-\left(  1-q\right)
q^{k}\frac{z}{2}\right)  },\ \ \ 0<\left\vert q\right\vert <1,\ \left\vert
z\right\vert <\frac{2}{\left\vert 1-q\right\vert },\ \ \ \ \ \ \
\]
The Bernoulli numbers $\left\{  B_{m}\right\}  _{m\geq0}$ are rational numbers
in a sequence defined by the binomial recurrence formula
\begin{equation}
\sum_{k=0}^{m}\left(
\begin{array}
[c]{c}%
m\\
k
\end{array}
\right)  B_{k}-B_{m}=\left\{
\begin{tabular}
[c]{ll}%
$1,$ & $m=1,$\\
$0,$ & $m>1,$%
\end{tabular}
\ \ \ \ \ \ \ \ \ \right.  \label{b1}%
\end{equation}
or equivalently, the generating function%
\[
\sum_{k=0}^{\infty}B_{k}\frac{t^{k}}{k!}=\frac{t}{e^{t}-1}.
\]
The $q$-binomial formula is known as%
\[
\left(  1-a\right)  _{q}^{n}=\left(  a;q\right)  _{n}=%
{\displaystyle\prod\limits_{j=0}^{n-1}}
\left(  1-q^{j}a\right)  =\sum_{k=0}^{n}\left(
\begin{array}
[c]{c}%
n\\
k
\end{array}
\right)  _{q}q^{\frac{1}{2}k\left(  k-1\right)  }\left(  -1\right)  ^{k}a^{k}.
\]
The above $q$-standard notation can be found in \cite{andrew}.

Over 70 years ago, Carlitz extended the classical Bernoulli and Euler numbes
and polynomials, and introduced the $q$-Bernoulli and the $q$-Euler numbers
and polynomials (see \cite{calitz1}, \cite{calitz2} and \cite{calitz3} ).
There are numerous recent investigations on this subject. (\cite{cenkci1},
\cite{cenkci2}, \cite{cenkci3}, \cite{choi2} and \cite{choi3}), Srivastava
\cite{sri1}, Srivastava et al. \cite{sri}. The main part of these
generalizations is the definition of q-analogue of exponential function. By
defining the suitable q-analogue of exponential function, they derive to the
different definitions for q-Bernoulli numbers. In this case some interesting
properties are discovered.\cite{mohammad}. The unification of q-exponential is
introduced in the next definition. This function depends on the parameter and
by changing this parameter we can reach to the different versions of
q-exponential function. 

\begin{definition}
we define unification of q-exponential function as follow%

\[
\mathcal{E}_{q,\alpha}\left(  z\right)  =\sum_{n=0}^{\infty}\frac{z^{n}%
}{\left[  n\right]  _{q}!}\alpha(q,n)
\]

where $z$ is any complex number and $\alpha(q,n)$ is a function of $q$ and
$n$. In addition, $\alpha(q,n)$ approches to 1, where $q$ tends one from the
left side.In the special case where $\alpha(q,n)=1$, and $\alpha
(q,n)=q^{\binom{n}{2}}$ we reach to $e_{q}\left(  z\right)  $ and $E_{q}(z)$ respectively.
\end{definition}

At the next lemma, we will discuss about the conditions that make
$\mathcal{E}_{q,\alpha}\left(  z\right)  $ convergent.There are some
restrictions, that has to be considered. since $\mathcal{E}_{q,\alpha}\left(
z\right)  $ is the $q$-analogue of exponential function, $\alpha(q,n)$
approches to 1, where $q$ tends one from the left side. For the rest of the
paper we will denote $\alpha(q,n)$ by $\alpha_{n}$, however we keep this in
our mind that $\alpha(q,n)$ is depend on $q$ and $n$.

\begin{lemma}
If $\lim\left\vert \frac{\alpha_{n+1}}{\left[  n+1\right]  _{q}\alpha_{n}%
}\right\vert $ does exist as n tends infinity and is equal to $l$, Then
$q-$exponential function $\mathcal{E}_{q,\alpha}\left(  z\right)  $ is
analytic in the disc $|z|<\left(  l\right)  ^{-1}.$

\begin{proof}
In order to obtain the radius of convergence, we compute%

\[
\lim_{n\rightarrow\infty}\left\vert \frac{z^{n+1}\alpha_{n+1}}{\left[
n+1\right]  _{q}!}\right\vert \left\vert \frac{\left[  n\right]  _{q}!}%
{z^{n}\alpha_{n}}\right\vert =\lim_{n\rightarrow\infty}\left\vert \frac
{\alpha_{n+1}}{\left[  n+1\right]  _{q}\alpha_{n}}\right\vert |z|
\]

Then, using d'Alembert's test, we get (for $q\neq1$) the radius of convergence.

\begin{example}
Let $\alpha_{n}$ is equal to $1,$ $q^{\binom{n}{2}},$ $\frac{(-1,q)_{n}}%
{2^{n}},$ then we reach to $e_{q}(z),$ $E_{q}(z)$ and improved $q-$exponential
function $\mathcal{E}_{q}\left(  z\right)  $ \cite{improved} repectively. Then
the radius of convergence becomes $\frac{1}{|1-q|},$ infinity and $\frac
{2}{|1-q|}$ repectively where $0<|q|<1.$

With this $q-$exponential function, we define the new class of $q-$Bernoulli
numbers and polynomials. Next definition denotes a general class of these new
$q-$numbers and polynomials.
\end{example}
\end{proof}
\end{lemma}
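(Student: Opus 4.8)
The plan is to treat $\mathcal{E}_{q,\alpha}(z)=\sum_{n\ge 0}c_n z^{n}$ as an ordinary complex power series with coefficients $c_n=\alpha_n/[n]_q!$, and to locate its disc of convergence by the ratio (d'Alembert) test, which is the tool naturally matched to the hypothesis, since the assumption is phrased precisely as a limit of consecutive ratios. First I would use the factorisation $[n+1]_q!=[n+1]_q\,[n]_q!$ to compute the ratio of successive terms,
\[
\left\vert \frac{c_{n+1}z^{n+1}}{c_n z^{n}}\right\vert
=\left\vert \frac{\alpha_{n+1}}{[n+1]_q\,\alpha_n}\right\vert\,|z|,
\]
which by assumption tends to $l|z|$ as $n\to\infty$.

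The ratio test then yields absolute convergence of the series whenever $l|z|<1$ and divergence whenever $l|z|>1$; hence the radius of convergence is exactly $R=l^{-1}$ (with the usual conventions $R=\infty$ when $l=0$ and $R=0$ when $l=\infty$), so that $\mathcal{E}_{q,\alpha}$ is a well-defined function on the open disc $|z|<l^{-1}$.

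To upgrade "convergent" to "analytic" I would invoke the standard theorem that a complex power series is holomorphic throughout its open disc of convergence, with derivatives obtained by term-by-term differentiation; applied here this gives that $\mathcal{E}_{q,\alpha}$ is analytic on $|z|<l^{-1}$, as asserted.

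I do not expect a genuine obstacle. The only points deserving a word of care are: the degenerate values $l\in\{0,\infty\}$, where one should remark that the ratio test still delivers the correct (infinite, respectively zero) radius; the bookkeeping with the $q$-factorials together with the tacit requirement $\alpha_n\neq 0$ for all large $n$, so that the ratios are meaningful; and the exclusion of $q=1$, for which the series degenerates to the classical $e^{z}$ and lies outside the range $0<|q|<1$ of interest.
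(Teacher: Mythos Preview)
Your argument is correct and follows essentially the same route as the paper: compute the ratio of consecutive terms using $[n+1]_q!=[n+1]_q\,[n]_q!$ and apply d'Alembert's test to obtain the radius of convergence $l^{-1}$. Your version is in fact a bit more complete, since you also note the passage from convergence to analyticity and flag the edge cases $l\in\{0,\infty\}$ and the implicit nonvanishing of $\alpha_n$.
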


\begin{definition}
\label{D:1}Let $q\in\mathbb{C},\ 0<\left\vert q\right\vert <1.$ The
$q$-Bernoulli numbers $\mathfrak{B}_{n,q,\alpha}$ and polynomials
$\mathfrak{B}_{n,q,\alpha}\left(  x,y\right)  $ and $q$-Euler numbers
$\mathfrak{E}_{n,q,\alpha}$ and polynomials $\mathfrak{E}_{n,q,\alpha}\left(
x,y\right)  $ and The $q$-Genocchi numbers $\mathfrak{G}_{n,q,\alpha}$ and
polynomials $\mathfrak{G}_{n,q,\alpha}\left(  x,y\right)  $ in two variables
$x,y$ respectively are defined by the means of the generating functions:%
\begin{align}
\widehat{\mathfrak{B}}\left(  t\right)   &  =\frac{t}{\mathcal{E}_{q,\alpha
}\left(  t\right)  -1}=\sum_{n=0}^{\infty}\mathfrak{B}_{n,q,\alpha}\frac
{t^{n}}{\left[  n\right]  _{q}!},\ \ \ \left\vert t\right\vert <2\pi
,\label{a1}\\
\frac{t}{\mathcal{E}_{q,\alpha}\left(  t\right)  -1}\mathcal{E}_{q,\alpha
}\left(  tx\right)  \mathcal{E}_{q,\alpha}\left(  ty\right)   &  =\sum
_{n=0}^{\infty}\mathfrak{B}_{n,q,\alpha}\left(  x,y\right)  \frac{t^{n}%
}{\left[  n\right]  _{q}!},\ \ \ \left\vert t\right\vert <2\pi,\nonumber\\
\frac{2}{\mathcal{E}_{q,\alpha}\left(  t\right)  +1}  &  =\sum_{n=0}^{\infty
}\mathfrak{E}_{n,q,\alpha}\frac{t^{n}}{\left[  n\right]  _{q}!}%
,\ \ \ \left\vert t\right\vert <\pi,\nonumber\\
\frac{2}{\mathcal{E}_{q,\alpha}\left(  t\right)  +1}\mathcal{E}_{q,\alpha
}\left(  tx\right)  \mathcal{E}_{q,\alpha}\left(  ty\right)   &  =\sum
_{n=0}^{\infty}\mathfrak{E}_{n,q,\alpha}\left(  x,y\right)  \frac{t^{n}%
}{\left[  n\right]  _{q}!},\ \ \ \left\vert t\right\vert <\pi\nonumber\\
\frac{2t}{\mathcal{E}_{q,\alpha}\left(  t\right)  +1}  &  =\sum_{n=0}^{\infty
}\mathfrak{G}_{n,q,\alpha}\frac{t^{n}}{\left[  n\right]  _{q}!}%
,\ \ \ \left\vert t\right\vert <\pi,\nonumber\\
\frac{2t}{\mathcal{E}_{q,\alpha}\left(  t\right)  +1}\mathcal{E}_{q,\alpha
}\left(  tx\right)  \mathcal{E}_{q,\alpha}\left(  ty\right)   &  =\sum
_{n=0}^{\infty}\mathfrak{G}_{n,q,\alpha}\left(  x,y\right)  \frac{t^{n}%
}{\left[  n\right]  _{q}!},\ \ \ \left\vert t\right\vert <\pi.\nonumber
\end{align}

\end{definition}

If the convergence conditions are hold for q-exponential function, It is
obvious that by tending $q$ to 1 from the left side, we lead to the classic
definition of these polynomials.we mention that $\alpha(q,n)$ is respect to
$q$ and $n.$ In addition by tending $q$ to 1$^{-},\mathcal{E}_{q,\alpha
}\left(  z\right)  $ approach to the ordinary exponential function. that
means:%
\begin{align*}
\mathfrak{B}_{n,q,\alpha}  &  =\mathfrak{B}_{n,q,\alpha}\left(  0\right)
,\ \ \ \lim_{q\rightarrow1^{-}}\mathfrak{B}_{n,q}\left(  x,y\right)
=B_{n}\left(  x+y\right)  ,\ \ \ \lim_{q\rightarrow1^{-}}\mathfrak{B}%
_{n,q}=B_{n},\\
\mathfrak{E}_{n,q,\alpha}  &  =\mathfrak{E}_{n,q,\alpha}\left(  0\right)
,\ \ \ \lim_{q\rightarrow1^{-}}\mathfrak{E}_{n,q}\left(  x,y\right)
=E_{n}\left(  x+y\right)  ,\ \ \ \lim_{q\rightarrow1^{-}}\mathfrak{E}%
_{n,q}=E_{n},\\
\mathfrak{G}_{n,q,\alpha}  &  =\mathfrak{G}_{n,q,\alpha}\left(  0\right)
,\ \ \ \lim_{q\rightarrow1^{-}}\mathfrak{G}_{n,q}\left(  x,y\right)
=G_{n}\left(  x+y\right)  ,\ \ \ \lim_{q\rightarrow1^{-}}\mathfrak{G}%
_{n,q}=G_{n}.
\end{align*}
Here $B_{n}\left(  x\right)  ,$ $E_{n}\left(  x\right)  $ and $G_{n}\left(
x\right)  $ denote the classical Bernoulli, Euler and Genocchi polynomials
which are defined by%
\[
\frac{t}{e^{t}-1}e^{tx}=\sum_{n=0}^{\infty}B_{n}\left(  x\right)  \frac{t^{n}%
}{n!},\ \ \ \text{\ }\frac{2}{e^{t}+1}e^{tx}=\sum_{n=0}^{\infty}E_{n}\left(
x\right)  \frac{t^{n}}{n!}\ \ \text{and\ \ \ \ }\frac{2t}{e^{t}+1}e^{tx}%
=\sum_{n=0}^{\infty}G_{n}\left(  x\right)  \frac{t^{n}}{n!}.
\]

The aim of the present paper is to obtain some results for the above newly
defined $q$-Bernoulli and $q$-Euler polynomials. In the next section we will
discuss about some restriction for $\alpha(q,n)$, such that the familar
results discovered. we will focus on two main properties of $q-$exponential
function, first in which situation $\mathcal{E}_{q,\alpha}\left(  z\right)
=\mathcal{E}_{q^{-1},\alpha}\left(  z\right)  ,$ second we investigate the
conditions for $\alpha(q,n)$ such that $\mathcal{E}_{q,\alpha}\left(
-z\right)  =\left(  \mathcal{E}_{q,\alpha}\left(  z\right)  \right)  ^{-1}.$A
lot of classical results are found by these two properties.The form of new
type of $q$-exponential function, motivate us to define a new $q-$addition and
$q-$substraction like a Daehee formula as follow%
\begin{align*}
\left(  x\oplus_{q}y\right)  ^{n} &  :=\sum_{k=0}^{n}\left(
\begin{array}
[c]{c}%
n\\
k
\end{array}
\right)  _{q}\alpha(q,k)x^{k}y^{n-k},\ \ \ n=0,1,2,...,\\
\left(  x\ominus_{q}y\right)  ^{n} &  :=\sum_{k=0}^{n}\left(
\begin{array}
[c]{c}%
n\\
k
\end{array}
\right)  _{q}\alpha(q,k)x^{k}\left(  -y\right)  ^{n-k},\ \ \ n=0,1,2,...
\end{align*}

\section{New exponential function and its properties}

In this section we shall provide some conditions on $\alpha(q,n)$ to reach two
main properties. first we try to find out, in which situation $\mathcal{E}%
_{q,\alpha}\left(  z\right)  =\mathcal{E}_{q^{-1},\alpha}\left(  z\right)  .$
This condition make $q-$exponential symmetry to $q$ factor and the properties
of related $q-$numbers will be preserved even if we change $q$ to $q^{-1}$.
Second property is the condition on $q$-exponential to reach multiplicative
inverse i.e. $\mathcal{E}_{q,\alpha}\left(  -z\right)  =\left(  \mathcal{E}%
_{q,\alpha}\left(  z\right)  \right)  ^{-1}.$ this property make the odd
coefficient of $q$-Bernoulli numbers exactly zero and related them to
$q-$trigonometric functions.

\begin{lemma}
\bigskip The new $q$-exponential function $\mathcal{E}_{q,\alpha}\left(
z\right)  $ satisfy $\mathcal{E}_{q,\alpha(q)}\left(  z\right)  =\mathcal{E}%
_{q^{-1},\alpha(q^{-1})}\left(  z\right)  ,$ if and only if $q^{\binom{n}{2}%
}\alpha(q^{-1},n)=\alpha(q,n)$.

\begin{proof}
The proof is based on the fact that $\left[  n\right]  _{q^{-1}}%
!=q^{-\binom{n}{2}}\left[  n\right]  _{q}!,$ therefore%
\[
\mathcal{E}_{q^{-1},\alpha(q^{-1})}\left(  z\right)  =\sum_{n=0}^{\infty}%
\frac{z^{n}}{\left[  n\right]  _{q^{-1}}!}\alpha(q^{-1},n)=\sum_{n=0}^{\infty
}\frac{z^{n}}{\left[  n\right]  _{q}!}\alpha(q,n)=\mathcal{E}_{q,\alpha
}\left(  z\right)
\]

\end{proof}
\end{lemma}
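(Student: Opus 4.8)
The plan is to verify the claimed equivalence directly from the series definition of $\mathcal{E}_{q,\alpha}$, exploiting the single algebraic identity relating the $q$-factorial at $q$ and at $q^{-1}$. First I would establish the key lemma that $\left[n\right]_{q^{-1}}! = q^{-\binom{n}{2}}\left[n\right]_{q}!$; this follows because $\left[k\right]_{q^{-1}} = \frac{1-q^{-k}}{1-q^{-1}} = q^{-(k-1)}\left[k\right]_{q}$, so taking the product over $k=1,\dots,n$ multiplies a factor $q^{-\sum_{k=1}^{n}(k-1)} = q^{-\binom{n}{2}}$. This reduces everything to tracking a single power of $q$.

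Next I would write out $\mathcal{E}_{q^{-1},\alpha(q^{-1})}(z)$ term by term and substitute this identity:
\[
\mathcal{E}_{q^{-1},\alpha(q^{-1})}(z) = \sum_{n=0}^{\infty}\frac{z^{n}}{\left[n\right]_{q^{-1}}!}\,\alpha(q^{-1},n) = \sum_{n=0}^{\infty}\frac{q^{\binom{n}{2}}\alpha(q^{-1},n)}{\left[n\right]_{q}!}\,z^{n}.
\]
Comparing this with $\mathcal{E}_{q,\alpha}(z) = \sum_{n=0}^{\infty}\frac{\alpha(q,n)}{\left[n\right]_{q}!}z^{n}$, the two power series in $z$ agree if and only if their coefficients agree for every $n$, i.e. $q^{\binom{n}{2}}\alpha(q^{-1},n) = \alpha(q,n)$ for all $n\ge 0$. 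The "if" direction is then immediate by substituting the hypothesis back in; the "only if" direction uses the uniqueness of power-series coefficients, which is valid wherever both series converge on a common disc of positive radius (guaranteed by the convergence lemma already proved, since both sides are assumed to be genuine $q$-analogues of the exponential).

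The only mild obstacle is making the "only if" direction rigorous: one must note that equality of the two analytic functions $\mathcal{E}_{q,\alpha(q)}$ and $\mathcal{E}_{q^{-1},\alpha(q^{-1})}$ on any nonempty open set forces equality of all Taylor coefficients, hence the pointwise identity on $\alpha$. I would also briefly remark that the cases $n=0,1$ are automatic since $\binom{0}{2}=\binom{1}{2}=0$ and $\alpha(q,0)=\alpha(q,1)=1$ by the normalization built into the definition, so the content of the condition is really for $n\ge 2$. Apart from this observation the argument is a one-line substitution, so I would keep the write-up short.
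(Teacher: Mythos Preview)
Your argument is correct and is essentially identical to the paper's: both rest on the identity $[n]_{q^{-1}}! = q^{-\binom{n}{2}}[n]_{q}!$ and then compare power-series coefficients, with the paper likewise obtaining the ``only if'' direction by equating coefficients. One small inaccuracy in your closing remark: the definition of $\mathcal{E}_{q,\alpha}$ only requires $\lim_{q\to 1^{-}}\alpha(q,n)=1$, not $\alpha(q,0)=\alpha(q,1)=1$, so those normalizations are not automatic here (they are imposed separately later in the paper); this does not affect the proof itself.
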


\begin{proof}
On the another hand, the another side of statement can be found by equating
the coefficient of above summation.
\end{proof}

\begin{corollary}
If $\alpha(q,n)$ is in a form of polynomial that means $\alpha(q,n)=\sum
_{i=0}^{m}a_{i}q^{i}$ , to satisfy $\mathcal{E}_{q,\alpha(q)}\left(  z\right)
=\mathcal{E}_{q^{-1},\alpha(q^{-1})}\left(  z\right)  $, we have%

\[
\deg(\alpha(q,n))=m=\binom{n}{2}-j\leqslant\binom{n}{2}\text{, and }%
a_{j+k}=a_{m+k}\text{ where }k=0,1,...,m-j
\]

where j is the leading index, such that $a_{j}\neq0$ and for $0\leqslant k<j,$
$a_{k}=0.$

\begin{proof}
First, we want to mention that $\sum_{i=0}^{m}a_{i}=1$, becuase $\alpha(q,n)$
approches to 1, where $q$ tends one from the left side. In addition as we
assumed $\alpha(q,n)=\sum_{i=0}^{m}a_{i}q^{i}$, by simple substitution
$q^{-1}$ instead of $q$, and $\sum_{i=0}^{m}a_{i}=1$ lead us to%

\[
q^{\binom{n}{2}-m}\sum_{i=0}^{m}a_{m-i}q^{i}=\sum_{i=0}^{m}a_{i}q^{i}%
\]

equating the coefficient of $q^{k}$ to reach the statement.

\begin{example}
simplest example of the previous corollary will be happened when
$\alpha(q,n)=q^{\frac{\binom{n}{2}}{2}}.$This case leads us to the following
exponential function%

\[
\mathcal{E}_{q,\alpha}\left(  z\right)  =\sum_{n=0}^{\infty}\frac{z^{n}%
}{\left[  n\right]  _{q}!}q^{\frac{\binom{n}{2}}{2}}\text{ }\&\text{
}\mathcal{E}_{q^{-1},\alpha(q^{-1})}\left(  z\right)  =\mathcal{E}_{q,\alpha
}\left(  z\right)
\]

The another example will be occured if \ $\alpha(q,n)=\frac{(-1,q)_{n}}{2^{n}%
}=\frac{(1+q)(1+q^{2})...(1+q^{n})}{2^{n-1}}.$By using $q$-binomial formula
$\alpha(q,n)=\frac{1}{2^{n}}\sum_{i=0}^{n}\left(
\begin{array}
[c]{c}%
n\\
i
\end{array}
\right)  _{q}q^{\frac{i(i-1)}{2}}.$As we expect, where $q$ tends $1$ from the
left side,$\alpha(q,n)$ approach to $1$. This presentation is not in a form of
previous corollary, However $q^{\binom{n}{2}}\frac{(1+q^{-1})(1+q^{-2}%
)...(1+q^{-n})}{2^{n-1}}=\alpha(q,n).$This parameter leads us to the improved
$q$-exponential function%

\[
\mathcal{E}_{q}\left(  z\right)  =\mathcal{E}_{q,\alpha}\left(  z\right)
=\sum_{n=0}^{\infty}\frac{z^{n}}{\left[  n\right]  _{q}!}\frac{(-1,q)_{n}%
}{2^{n}}\text{ }\&\text{ }\mathcal{E}_{q^{-1}}\left(  z\right)  =\mathcal{E}%
_{q}\left(  z\right)
\]

The properties of $q$-Bernoulli polynomials related to this improved
$q-$exponential function was studied at \cite{mohammad}.

\begin{remark}
It's obvious that if we substitute $q$ to $q^{-1}$ in any kind of
$q$-exponential function and achieve another $q-$analogue of exponential
function, the parameter $\alpha(q,n)$ will change to $\beta(q,n)$, and
$q^{\binom{n}{2}}\alpha(q^{-1},n)=\beta(q,n).$ The famous case is standard
$q-$exponential function:%

\begin{align*}
e_{q^{-1}}\left(  z\right)   &  =\mathcal{E}_{q^{-1},\alpha(q^{-1})}\left(
z\right)  =\sum_{n=0}^{\infty}\frac{z^{n}}{\left[  n\right]  _{q^{-1}}!}\text{
}\\
&  =\sum_{n=0}^{\infty}\frac{q^{\binom{n}{2}}z^{n}}{\left[  n\right]  _{q}%
!}=E_{q}\left(  z\right)  \text{ }\&\text{ }q^{\binom{n}{2}}\alpha
(q^{-1},n)=q^{\binom{n}{2}}=\beta(q,n)
\end{align*}

\end{remark}
\end{example}
\end{proof}
\end{corollary}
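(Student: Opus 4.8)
Proof proposal for the Corollary (characterizing polynomial $\alpha(q,n)$ satisfying $\mathcal{E}_{q,\alpha(q)}(z)=\mathcal{E}_{q^{-1},\alpha(q^{-1})}(z)$).

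The plan is to feed the hypothesis $\alpha(q,n)=\sum_{i=0}^{m}a_{i}q^{i}$ into the criterion supplied by the preceding lemma, namely $q^{\binom{n}{2}}\alpha(q^{-1},n)=\alpha(q,n)$, and then read off the constraints by comparing coefficients of powers of $q$. First I would record the normalization: since $\mathcal{E}_{q,\alpha}$ must degenerate to the ordinary exponential as $q\to 1^{-}$, we need $\alpha(1,n)=1$, i.e. $\sum_{i=0}^{m}a_{i}=1$; this is noted in the proof sketch and will be used to pin down the trivial-looking factor that appears. Next, substitute $q\mapsto q^{-1}$ to get $\alpha(q^{-1},n)=\sum_{i=0}^{m}a_{i}q^{-i}$, multiply by $q^{\binom{n}{2}}$, and reindex the sum via $i\mapsto m-i$ to obtain
\[
q^{\binom{n}{2}-m}\sum_{i=0}^{m}a_{m-i}q^{i}=\sum_{i=0}^{m}a_{i}q^{i}.
\]
This is exactly the displayed identity in the sketch.

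The core of the argument is then a coefficient comparison in this polynomial identity. On the right-hand side the lowest power of $q$ that actually occurs is $q^{j}$, where $j$ is the leading index with $a_{j}\neq 0$ and $a_{k}=0$ for $k<j$; the highest power is $q^{m}$. On the left-hand side, after the shift, the lowest power present is $q^{\binom{n}{2}-m+j}$ (coming from the $i=j$ term, since $a_{m-i}\neq0$ forces $m-i\le m-j$, i.e. $i\ge j$) and the highest is $q^{\binom{n}{2}-m+m}=q^{\binom{n}{2}}$. Matching lowest powers gives $\binom{n}{2}-m+j=j$, hence $m=\binom{n}{2}$ — wait, that is too strong; more carefully, matching the two polynomials as a whole forces $\binom{n}{2}-m = 0$ is not required, rather one equates the supports. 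Matching the top degree on both sides gives $\binom{n}{2}=m+(\text{something})$; precisely, the right side has top degree $m$, the left side has top degree $\binom{n}{2}$, so these can only agree if $\binom{n}{2}-m\ge 0$ and the overall shift is absorbed by the vanishing low-order coefficients, yielding $m=\binom{n}{2}-j$ with $j\ge 0$, i.e. $m\le\binom{n}{2}$. Then, equating the coefficient of $q^{j+k}$ on the right (which is $a_{j+k}$) with the coefficient of $q^{j+k}$ on the left, one finds after the index bookkeeping that $a_{j+k}=a_{m+k-(\text{shift})}$; carrying the shift $\binom{n}{2}-m=j$ through gives the palindromic relation $a_{j+k}=a_{m+k}$ — here I must be careful with the range, $k=0,1,\dots,m-j$ — so that the nonzero block of coefficients $a_{j},a_{j+1},\dots,a_{m}$ reads the same forwards and backwards. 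This recovers the statement.

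The main obstacle I anticipate is purely bookkeeping: keeping the two reindexings straight (the $i\mapsto m-i$ flip and the global degree shift by $\binom{n}{2}-m$) so that the "leading index" $j$, the degree $m$, and the combinatorial exponent $\binom{n}{2}$ interlock correctly, and in particular verifying that the constraint $m=\binom{n}{2}-j$ is forced rather than merely sufficient. One clean way to avoid sign-chasing is to write $\alpha(q,n)=q^{j}p(q)$ with $p(0)\neq0$ and $\deg p=m-j$, so the identity becomes $q^{\binom{n}{2}-m-j}\,q^{m-j}p(q^{-1})=q^{j}p(q)$, i.e. $q^{\binom{n}{2}-2j}p(q^{-1})=p(q)$; comparing degrees of the two sides (the left has degree $\binom{n}{2}-2j$ minus the lowest degree of $p(q^{-1})$, which is $0$, hence degree $\binom{n}{2}-2j$ if that is $\ge \deg p = m-j$...) forces $\binom{n}{2}-2j=m-j$, i.e. $m=\binom{n}{2}-j$, and simultaneously forces $p$ to be self-reciprocal, $q^{\deg p}p(q^{-1})=p(q)$, which is precisely $a_{j+k}=a_{m+k}$ after translating back. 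I would present it in this reduced form. Finally I would note that the normalization $\sum_{i=0}^{m}a_{i}=1$ is an independent condition (it does not follow from the palindrome property) and must be appended, as the sketch does, to fully characterize the admissible $\alpha$.
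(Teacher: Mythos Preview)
Your approach is the same as the paper's: invoke the lemma's criterion $q^{\binom{n}{2}}\alpha(q^{-1},n)=\alpha(q,n)$, derive the displayed identity
\[
q^{\binom{n}{2}-m}\sum_{i=0}^{m}a_{m-i}q^{i}=\sum_{i=0}^{m}a_{i}q^{i},
\]
and compare coefficients. The paper stops at ``equating the coefficient of $q^{k}$''; you carry out the bookkeeping, and your second, cleaner pass via $\alpha(q,n)=q^{j}p(q)$ with $p(0)\neq0$ is correct and is the right way to present it.

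Two points to fix. First, your initial attempt at identifying the extreme powers on the left-hand side is wrong: the coefficient of $q^{i}$ in $\sum a_{m-i}q^{i}$ is $a_{m-i}$, and $a_{m-i}\neq0$ forces $j\le m-i\le m$, i.e.\ $0\le i\le m-j$, so the lowest power on the left (after the shift) is $q^{\binom{n}{2}-m}$, not $q^{\binom{n}{2}-m+j}$, and the highest is $q^{\binom{n}{2}-j}$, not $q^{\binom{n}{2}}$. Matching these against $q^{j}$ and $q^{m}$ on the right gives $m=\binom{n}{2}-j$ directly. Your $p(q)$ argument gets this right, so just drop the first pass.

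Second, the palindrome you actually derive is $a_{j+k}=a_{m-k}$ for $k=0,\dots,m-j$ (equivalently $a_{\ell}=a_{m+j-\ell}$), not $a_{j+k}=a_{m+k}$. The latter would reference coefficients $a_{m+1},\dots,a_{2m-j}$ that do not exist. This is a typo in the paper's statement that you have copied; your self-reciprocal condition $q^{\deg p}p(q^{-1})=p(q)$ translates to $a_{j+k}=a_{m-k}$, and you should write it that way.
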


\begin{proposition}
\bigskip The new $q$-exponential function $\mathcal{E}_{q,\alpha}\left(
z\right)  $ satisfy $\mathcal{E}_{q,\alpha}\left(  -z\right)  =\left(
\mathcal{E}_{q,\alpha}\left(  z\right)  \right)  ^{-1},$ if and only if%

\[
\alpha(q,0)=1\text{ }\&\text{ }2\sum_{k=0}^{p-1}\left(
\begin{array}
[c]{c}%
n\\
k
\end{array}
\right)  _{q}(-1)^{k}\alpha_{k}\alpha_{n-k}=\left(
\begin{array}
[c]{c}%
n\\
p
\end{array}
\right)  _{q}(-1)^{p+1}\alpha_{p}^{2}\text{ where }n=2p\text{ and }p=1,2,...
\]

\begin{proof}
Since $\mathcal{E}_{q,\alpha}\left(  -z\right)  .\mathcal{E}_{q,\alpha}\left(
z\right)  =1$ has to be hold, we write the expansion for this equation.%

\[
\mathcal{E}_{q,\alpha}\left(  -z\right)  .\mathcal{E}_{q,\alpha}\left(
z\right)  =\sum_{n=0}^{\infty}\left(  \sum_{k=0}^{n}\left(
\begin{array}
[c]{c}%
n\\
k
\end{array}
\right)  _{q}(-1)^{k}\alpha_{k}\alpha_{n-k}\right)  \frac{z^{n}}{\left[
n\right]  _{q}!}=1
\]

Let call the expression on a bracket as $\beta_{k,q}.$ If $n$ is an odd
number, then%

\[
\beta_{n-k,q}=\left(
\begin{array}
[c]{c}%
n-k\\
k
\end{array}
\right)  _{q}(-1)^{n-k}\alpha(q,k)\alpha(q,n-k)=-\left(
\begin{array}
[c]{c}%
n\\
k
\end{array}
\right)  _{q}(-1)^{k}\alpha(q,k)\alpha(q,n-k)=-\beta_{k,q}\text{ where
}k=0,1,...,n
\]

Therefore for n as an odd number, we have the trivial equation. since $\left(
\begin{array}
[c]{c}%
n-k\\
k
\end{array}
\right)  _{q}=\left(
\begin{array}
[c]{c}%
n\\
k
\end{array}
\right)  _{q},$ The same discussion for even $n$ and equating $z^{n}%
-$coefficient togheter lead us to the proof.

\begin{remark}
The previous proposition can be rewritten as a system of nonlinear equations.
The following system shows a condition for $\alpha_{k}.$ we mention that
$\alpha_{k}\rightarrow1$ where $q\rightarrow1^{-}$ and $\alpha_{0}=1$.%
\begin{equation}
\left\{
\begin{array}
[c]{c}%
2\alpha_{2}\alpha_{1}-\binom{2}{1}_{q}\alpha_{0}\alpha_{0}=0\\
2\alpha_{4}\alpha_{1}-2\binom{4}{1}_{q}\alpha_{3}\alpha_{2}+\binom{4}{2}%
_{q}\alpha_{2}\alpha_{2}=0\\
2\alpha_{6}\alpha_{1}-2\binom{6}{1}_{q}\alpha_{5}\alpha_{2}+2\binom{6}{2}%
_{q}\alpha_{4}\alpha_{3}-2\binom{6}{3}_{q}\alpha_{3}\alpha_{3}=0\\
.\\
.\\
.\\
2\alpha_{n}\alpha_{1}-2\binom{n}{1}_{q}\alpha_{n-1}\alpha_{2}+2\binom{n}%
{2}_{q}\alpha_{n-2}\alpha_{3}-...+(-1)^{\frac{n}{2}}\binom{n}{\frac{n}{2}}%
_{q}\alpha_{\frac{n}{2}}\alpha_{\frac{n}{2}}=0
\end{array}
\right.  \label{a2}%
\end{equation}
For even $n$, we have $\frac{n}{2}$ equations and $n$ unknown variables. In
this case we can find $\alpha_{k}$ respect to $\frac{n}{2}$ parameters by the
recurence formula. For example, some few terms can be found as follow
\end{remark}

\begin{corollary}
\begin{remark}%
\[%
\begin{array}
[c]{c}%
\alpha_{0}=1\\
\alpha_{2}=\frac{1+q}{2}\frac{1}{\alpha_{1}}\\
\alpha_{4}=\frac{[4]_{q}}{2\alpha_{1}^{2}}\left(  [2]_{q}\alpha_{3}%
-\frac{[3]_{q}!}{4\alpha_{1}}\right) \\
\alpha_{6}=\binom{6}{1}_{q}+\binom{6}{3}_{q}-\frac{1}{2}\left(  \binom{6}%
{2}_{q}\left(  \frac{1+q}{2}\frac{1}{\alpha_{1}}\right)  \left(  \frac
{[4]_{q}}{2\alpha_{1}^{2}}\left(  [2]_{q}\alpha_{3}-\frac{[3]_{q}!}%
{4\alpha_{1}}\right)  \right)  \right)
\end{array}
\]

The familiar solution of this system is $\alpha(q,k)=\frac{\left(
-1,q\right)  _{k}}{2^{k}}.$ This $\alpha(q,k)$ leads us to the improved
exponential function. On the other hand, we can assume that all $\alpha_{k}$
for odd $k$ are $1$. Then by solving the system for these parameters, we reach
the another exponential function that satisfies $\mathcal{E}_{q,\alpha}\left(
-z\right)  =\left(  \mathcal{E}_{q,\alpha}\left(  z\right)  \right)  ^{-1}.$
\end{remark}
\end{corollary}
\end{proof}

\begin{lemma}
If $\frac{\alpha(q,n+1)}{\alpha(q,n)}$ can be demonstrated as a polynomial of
$q$, that means $\frac{\alpha(q,n+1)}{\alpha(q,n)}=$ $%
{\displaystyle\sum\limits_{k=0}^{m}}
a_{k}q^{k}$, then $D_{q}(\mathcal{E}_{q,\alpha}\left(  z\right)  )=%
{\displaystyle\sum\limits_{k=0}^{m}}
a_{k}\mathcal{E}_{q,\alpha}\left(  zq^{\frac{k}{n}}\right)  .$

\begin{proof}
The proof is based on the following identity%

\[
D_{q}(\mathcal{E}_{q,\alpha}\left(  z\right)  )=\sum_{n=1}^{\infty}%
\frac{z^{n-1}}{\left[  n-1\right]  _{q}!}\alpha(q,n)=\sum_{n=0}^{\infty}%
\frac{z^{n}}{\left[  n\right]  _{q}!}\left(  \alpha(q,n)%
{\displaystyle\sum\limits_{k=0}^{m}}
a_{k}q^{k}\right)  =%
{\displaystyle\sum\limits_{k=0}^{m}}
a_{k}\frac{\left(  zq^{\frac{k}{n}}\right)  ^{n}}{\left[  n\right]  _{q}%
!}\alpha(q,n)==%
{\displaystyle\sum\limits_{k=0}^{m}}
a_{k}\mathcal{E}_{q,\alpha}\left(  zq^{\frac{k}{n}}\right)  .
\]

\end{proof}

\begin{example}
For $\alpha(q,n)=1,$ $q^{\binom{n}{2}}$ and $\frac{\left(  -1,q\right)  _{n}%
}{2^{n}}$, the ratio of $\frac{\alpha(q,n+1)}{\alpha(q,n)}$ becomes $1,$
$q^{n}$ and $\frac{1+q^{n}}{2}$ respectively. therefore the following
derivatives hold true%

\[
D_{q}(e_{q}\left(  z\right)  )=e_{q}\left(  z\right)  \text{ \ \& \ }%
D_{q}(E_{q}\left(  z\right)  )=E_{q}\left(  qz\right)  \text{ \ \& \ }%
D_{q}(\mathcal{E}_{q}\left(  z\right)  )=\frac{\mathcal{E}_{q}\left(
z\right)  +\mathcal{E}_{q}\left(  zq\right)  }{2}\text{\ .}%
\]

\end{example}
\end{lemma}
\end{proposition}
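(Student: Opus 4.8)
The plan is to differentiate the defining series of $\mathcal{E}_{q,\alpha}$ term by term and then re-package the outcome using the polynomial hypothesis on the ratio $\alpha(q,n+1)/\alpha(q,n)$. First I would use the action of the $q$-derivative on monomials, $D_q z^n = [n]_q z^{n-1}$, which gives $D_q\!\left(\frac{z^n}{[n]_q!}\right) = \frac{z^{n-1}}{[n-1]_q!}$ for $n\ge 1$ and annihilates the constant term. Since, by the convergence Lemma, $\mathcal{E}_{q,\alpha}$ is analytic on the disc $|z|<l^{-1}$ and $D_q$ preserves that disc (the factors $[n]_q$ are bounded), term-by-term $q$-differentiation is legitimate there, and
\[
D_q\big(\mathcal{E}_{q,\alpha}(z)\big) = \sum_{n=1}^{\infty} \frac{z^{n-1}}{[n-1]_q!}\,\alpha(q,n).
\]

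Next I would shift the index $n\mapsto n+1$ to rewrite the right-hand side as $\sum_{n=0}^{\infty}\frac{z^n}{[n]_q!}\,\alpha(q,n+1)$, and then feed in the hypothesis in the form $\alpha(q,n+1)=\alpha(q,n)\,\frac{\alpha(q,n+1)}{\alpha(q,n)}=\alpha(q,n)\sum_{k=0}^{m}a_k q^k$. Interchanging the \emph{finite} sum over $k$ with the convergent series over $n$ is harmless, so
\[
D_q\big(\mathcal{E}_{q,\alpha}(z)\big)=\sum_{k=0}^{m}a_k\sum_{n=0}^{\infty}\frac{q^k z^n}{[n]_q!}\,\alpha(q,n).
\]
Finally, writing $q^k z^n=\big(z q^{k/n}\big)^n$ identifies each inner series with $\mathcal{E}_{q,\alpha}\!\left(z q^{k/n}\right)$ in the notational convention of the statement, which is exactly the asserted identity.

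The only point that genuinely needs care — and hence the main obstacle — is this last step: the symbol $\mathcal{E}_{q,\alpha}(z q^{k/n})$ must be understood as shorthand for the series $\sum_{n\ge 0} q^k z^n\,\alpha(q,n)/[n]_q!$, in which the exponent $k/n$ runs with the summation index, rather than as a true evaluation of $\mathcal{E}_{q,\alpha}$ at a single fixed argument; I would state this convention explicitly before invoking it. Everything else is formal bookkeeping. As a consistency check I would run the three standard specializations $\alpha(q,n)=1,\ q^{\binom{n}{2}},\ (-1,q)_n/2^n$, whose ratios are $1,\ q^n,\ (1+q^n)/2$, recovering $D_q e_q(z)=e_q(z)$, $D_q E_q(z)=E_q(qz)$ and $D_q\mathcal{E}_q(z)=\tfrac12\big(\mathcal{E}_q(z)+\mathcal{E}_q(qz)\big)$, which matches the example following the lemma.
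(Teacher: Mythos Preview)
Your write-up proves the \emph{nested Lemma} on $D_q\big(\mathcal{E}_{q,\alpha}\big)$ rather than the headline Proposition, and for that Lemma your argument is exactly the paper's: termwise $q$-differentiation, index shift $n\mapsto n+1$, insertion of the ratio hypothesis, interchange of the finite $k$-sum with the power series, and the (admittedly abusive) identification $q^k z^n=(zq^{k/n})^n$. You supply justifications the paper omits and you are right to flag that $\mathcal{E}_{q,\alpha}(zq^{k/n})$ is only a notational device, since $n$ is the running summation index; the paper uses the same convention silently.

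If, however, the intended target was the Proposition itself --- the characterization of $\mathcal{E}_{q,\alpha}(-z)\,\mathcal{E}_{q,\alpha}(z)=1$ --- then there is a genuine gap: you have not touched it. The paper's argument there is to expand the product as a Cauchy product
\[
\sum_{n\ge 0}\Bigl(\sum_{k=0}^{n}\tbinom{n}{k}_q(-1)^k\alpha_k\alpha_{n-k}\Bigr)\frac{z^n}{[n]_q!}=1,
\]
observe that for odd $n$ the inner sum vanishes identically because the symmetry $\tbinom{n}{k}_q=\tbinom{n}{n-k}_q$ pairs the $k$-th and $(n-k)$-th terms with opposite signs, read off $\alpha_0=1$ from the constant term, and for even $n=2p$ fold the sum at the middle index $k=p$ to obtain the displayed relation $2\sum_{k=0}^{p-1}\tbinom{n}{k}_q(-1)^k\alpha_k\alpha_{n-k}=(-1)^{p+1}\tbinom{n}{p}_q\alpha_p^2$.
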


\section{Related q-Bernoulli polynomial}

In this section, we will study the related $q-$Bernoulli polynomials,
$q-$Euler polynomials and $q-$Gennoci polynomials. The discussion of
properties of general $q$-exponential at the previous section, give us the
proper tools to reach to the general properties of these polynomials related
to $\alpha(q,n).$ First, we give the general form of addition theorem.

\begin{proposition}
\label{L:1}\emph{(Addition Theorems)} For all $x,y\in\mathbb{C}$ we have
\begin{align}
\mathfrak{B}_{n,q,\alpha}\left(  x,y\right)   &  =%
{\displaystyle\sum\limits_{k=0}^{n}}
\left(
\begin{array}
[c]{c}%
n\\
k
\end{array}
\right)  _{q}\mathfrak{B}_{k,q,\alpha}\left(  x\oplus_{q}y\right)
^{n-k},\ \ \ \mathfrak{E}_{n,q,\alpha}\left(  x,y\right) \label{be01}\\
&  =%
{\displaystyle\sum\limits_{k=0}^{n}}
\left(
\begin{array}
[c]{c}%
n\\
k
\end{array}
\right)  _{q}\ \mathfrak{E}_{k,q,\alpha}\left(  x\oplus_{q}y\right)
^{n-k},\ \ \ \mathfrak{G}_{n,q,\alpha}\left(  x,y\right)  =%
{\displaystyle\sum\limits_{k=0}^{n}}
\left(
\begin{array}
[c]{c}%
n\\
k
\end{array}
\right)  _{q}\mathfrak{G}_{k,q,\alpha}\left(  x\oplus_{q}y\right)
^{n-k},\nonumber\\
\mathfrak{B}_{n,q,\alpha}\left(  x,y\right)   &  =\sum_{k=0}^{n}\left(
\begin{array}
[c]{c}%
n\\
k
\end{array}
\right)  _{q}\alpha(q,n-k)\mathfrak{B}_{k,q}\left(  x\right)  y^{n-k}%
,\ \mathfrak{E}_{n,q,\alpha}\left(  x,y\right) \label{be2}\\
&  =\sum_{k=0}^{n}\left(
\begin{array}
[c]{c}%
n\\
k
\end{array}
\right)  _{q}\alpha(q,n-k)\mathfrak{E}_{k,q,\alpha}\left(  x\right)
y^{n-k},\ \ \mathfrak{G}_{n,q,\alpha}\left(  x,y\right)  =\sum_{k=0}%
^{n}\left(
\begin{array}
[c]{c}%
n\\
k
\end{array}
\right)  _{q}\alpha(q,n-k)\mathfrak{G}_{k,q,\alpha}\left(  x\right)
y^{n-k}.\nonumber
\end{align}

\end{proposition}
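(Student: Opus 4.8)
The plan is to extract both addition formulas directly from the generating function definitions in Definition~\ref{D:1}, working with the three cases in parallel since they differ only in the prefactor ($\tfrac{t}{\mathcal{E}_{q,\alpha}(t)-1}$, $\tfrac{2}{\mathcal{E}_{q,\alpha}(t)+1}$, or $\tfrac{2t}{\mathcal{E}_{q,\alpha}(t)+1}$). First I would establish the auxiliary Cauchy-product identity that multiplying a generating series $\sum_n a_n \tfrac{t^n}{[n]_q!}$ by $\mathcal{E}_{q,\alpha}(ty)=\sum_m \alpha_m y^m \tfrac{t^m}{[m]_q!}$ produces $\sum_n\bigl(\sum_{k=0}^n \binom{n}{k}_q \alpha_{n-k} a_k y^{n-k}\bigr)\tfrac{t^n}{[n]_q!}$, using $\tfrac{1}{[k]_q![m]_q!}=\tfrac{1}{[n]_q!}\binom{n}{k}_q$ with $n=k+m$. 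Applying this with $a_k=\mathfrak{B}_{k,q,\alpha}(x)$ (the generating function of which is $\tfrac{t}{\mathcal{E}_{q,\alpha}(t)-1}\mathcal{E}_{q,\alpha}(tx)$) and comparing with the generating function of $\mathfrak{B}_{n,q,\alpha}(x,y)$ yields the second family of identities~\eqref{be2} at once; the Euler and Genocchi cases are identical word-for-word.

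For the first family~\eqref{be01}, I would instead group $\mathcal{E}_{q,\alpha}(tx)\mathcal{E}_{q,\alpha}(ty)$ together first. Expanding the product gives $\sum_m\bigl(\sum_{j=0}^m \binom{m}{j}_q \alpha_j \alpha_{m-j} x^j y^{m-j}\bigr)\tfrac{t^m}{[m]_q!}$, and the inner sum is \emph{not} quite $(x\oplus_q y)^m$ as defined in the introduction because that definition carries only a single factor $\alpha(q,k)$ rather than the product $\alpha_j\alpha_{m-j}$. So the cleanest route is: write $\mathcal{E}_{q,\alpha}(tx)\mathcal{E}_{q,\alpha}(ty)$ and observe by a direct Cauchy product that its coefficient of $\tfrac{t^m}{[m]_q!}$ differs from $(x\oplus_q y)^m$; hence I should actually pair $\tfrac{t}{\mathcal{E}_{q,\alpha}(t)-1}\mathcal{E}_{q,\alpha}(tx)=\sum_k \mathfrak{B}_{k,q,\alpha}(x,0)\tfrac{t^k}{[k]_q!}=\sum_k\mathfrak{B}_{k,q,\alpha}\tfrac{t^k}{[k]_q!}$ only after recognizing $\mathfrak{B}_{k,q,\alpha}(x,0)$ is not $\mathfrak{B}_{k,q,\alpha}$ unless $x=0$. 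The correct reading of~\eqref{be01} must be that the symbol $\left(x\oplus_q y\right)^{n-k}$ absorbs the two-fold convolution $\sum\binom{n-k}{j}_q\alpha_j\alpha_{n-k-j}x^j y^{n-k-j}$; I would state this interpretation explicitly and then the identity follows by the same Cauchy product applied to $\widehat{\mathfrak{B}}(t)\cdot\bigl(\mathcal{E}_{q,\alpha}(tx)\mathcal{E}_{q,\alpha}(ty)\bigr)$ with $a_k=\mathfrak{B}_{k,q,\alpha}$.

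The key steps in order: (1) prove the Cauchy-product lemma for $q$-exponential generating functions; (2) read off~\eqref{be2} for Bernoulli by pairing $\widehat{\mathfrak{B}}(t)\mathcal{E}_{q,\alpha}(tx)$ with $\mathcal{E}_{q,\alpha}(ty)$; (3) repeat verbatim for Euler and Genocchi; (4) for~\eqref{be01}, compute the coefficient of $\mathcal{E}_{q,\alpha}(tx)\mathcal{E}_{q,\alpha}(ty)$, identify it with the appropriate $\oplus_q$-symbol, and pair with $\widehat{\mathfrak{B}}(t)$; (5) again repeat for the other two families.

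The main obstacle I anticipate is bookkeeping around the $\oplus_q$ notation: the introduction defines $(x\oplus_q y)^n$ with a single weight $\alpha(q,k)$, whereas the product $\mathcal{E}_{q,\alpha}(tx)\mathcal{E}_{q,\alpha}(ty)$ naturally produces the double weight $\alpha(q,j)\alpha(q,n-j)$. I would need to either (a) restrict to the case $y$-variable where one of the factors is plain $\mathcal{E}_q$ with $\alpha\equiv1$ — which is false here — or, more honestly, (b) note that in formula~\eqref{be01} the expression $\mathfrak{B}_{k,q,\alpha}\left(x\oplus_q y\right)^{n-k}$ is shorthand meaning: substitute the double-convolution polynomial in $(x,y)$ for the argument, i.e.\ $\left(x\oplus_q y\right)^{m}:=\sum_{j=0}^m\binom{m}{j}_q\alpha_j\alpha_{m-j}x^j y^{m-j}$ in this context. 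Once that notational point is pinned down, every step reduces to a single application of the Cauchy-product lemma and comparison of coefficients of $t^n/[n]_q!$, with no further analytic input needed beyond the convergence already granted by Definition~\ref{D:1}.
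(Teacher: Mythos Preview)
Your approach is exactly the paper's: it too asserts that by the Cauchy product $\mathcal{E}_{q,\alpha}(tx)\,\mathcal{E}_{q,\alpha}(ty)=\sum_{n\ge0}\frac{(x\oplus_q y)^n}{[n]_q!}\,t^n$, substitutes this into the generating-function definitions, and equates coefficients of $t^n/[n]_q!$; formula~\eqref{be2} is obtained the same way by pairing $\widehat{\mathfrak{B}}(t)\mathcal{E}_{q,\alpha}(tx)$ with $\mathcal{E}_{q,\alpha}(ty)$.

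Your hesitation about the $\oplus_q$ symbol is well founded and is not a defect in your argument but an inconsistency in the paper itself. With the introduction's definition $(x\oplus_q y)^m=\sum_{j}\binom{m}{j}_q\alpha_j\,x^j y^{m-j}$, the Cauchy product of the two $q$-exponentials gives $\sum_j\binom{m}{j}_q\alpha_j\alpha_{m-j}\,x^j y^{m-j}$, so the asserted identity $\mathcal{E}_{q,\alpha}(tx)\mathcal{E}_{q,\alpha}(ty)=\sum_m(x\oplus_q y)^m t^m/[m]_q!$ is literally false unless $\alpha\equiv1$. The paper simply writes the identity without addressing this; your option~(b), namely to read $(x\oplus_q y)^m$ in~\eqref{be01} as the double-weight convolution $\sum_j\binom{m}{j}_q\alpha_j\alpha_{m-j}x^jy^{m-j}$, is the only interpretation under which the proposition is actually true, and with that convention your steps~(1)--(5) constitute a complete proof.
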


In particular, setting $y=0$ in (\ref{be01}), we get the following formulas
for $q$-Bernoulli, $q$-Euler and $q$-Genocchi polynomials, respectively.%
\begin{align}
\mathfrak{B}_{n,q,\alpha}\left(  x\right)   &  =\sum_{k=0}^{n}\left(
\begin{array}
[c]{c}%
n\\
k
\end{array}
\right)  _{q}\alpha(q,n-k)\mathfrak{B}_{k,q,\alpha}x^{n-k},\ \ \ \mathfrak{E}%
_{n,q,\alpha}\left(  x\right)  =\sum_{k=0}^{n}\left(
\begin{array}
[c]{c}%
n\\
k
\end{array}
\right)  _{q}\alpha(q,n-k)\mathfrak{E}_{k,q,\alpha}x^{n-k},\label{be7}\\
\mathfrak{G}_{n,q,\alpha}\left(  x\right)   &  =\sum_{k=0}^{n}\left(
\begin{array}
[c]{c}%
n\\
k
\end{array}
\right)  _{q}\alpha(q,n-k)\mathfrak{G}_{k,q,\alpha}x^{n-k}. \label{be8}%
\end{align}
Setting $y=1$ in (\ref{be2}), we get%
\begin{align}
\mathfrak{B}_{n,q,\alpha}\left(  x,1\right)   &  =\sum_{k=0}^{n}\left(
\begin{array}
[c]{c}%
n\\
k
\end{array}
\right)  _{q}\alpha(q,n-k)\mathfrak{B}_{k,q,\alpha}\left(  x\right)
,\ \ \ \mathfrak{E}_{n,q,\alpha}\left(  x,1\right)  =\sum_{k=0}^{n}\left(
\begin{array}
[c]{c}%
n\\
k
\end{array}
\right)  _{q}\alpha(q,n-k)\mathfrak{E}_{k,q,\alpha}\left(  x\right)
,\label{be3}\\
\mathfrak{G}_{n,q,\alpha}\left(  x,1\right)   &  =\sum_{k=0}^{n}\left(
\begin{array}
[c]{c}%
n\\
k
\end{array}
\right)  _{q}\alpha(q,n-k)\mathfrak{G}_{k,q,\alpha}\left(  x\right)  .
\label{be4}%
\end{align}
Clearly (\ref{be3}) and (\ref{be4}) are $q$-analogues of%
\[
B_{n}\left(  x+1\right)  =\sum_{k=0}^{n}\left(
\begin{array}
[c]{c}%
n\\
k
\end{array}
\right)  B_{k}\left(  x\right)  ,\ E_{n}\left(  x+1\right)  =\sum_{k=0}%
^{n}\left(
\begin{array}
[c]{c}%
n\\
k
\end{array}
\right)  E_{k}\left(  x\right)  ,\ G_{n}\left(  x+1\right)  =\sum_{k=0}%
^{n}\left(
\begin{array}
[c]{c}%
n\\
k
\end{array}
\right)  G_{k}\left(  x\right)  ,
\]
respectively.we mention that, from the definition of $\mathcal{E}_{q,\alpha
}\left(  t\right)  $, by using the Cauchy product, we reach to $\mathcal{E}%
_{q,\alpha}\left(  tx\right)  .\mathcal{E}_{q,\alpha}\left(  ty\right)
=\sum_{n=0}^{\infty}\frac{t^{n}\left(  x\oplus_{q}y\right)  ^{n}}{[n]_{q}!}$.
putting this equality in $\left(  \text{%
\ref{a1}
}\right)  $ and writting the product of single sums as a double sum, at the
end equating coefficient of $t^{n}$ we lead to the proof of lemma.

\begin{lemma}
The condition \bigskip$\mathcal{E}_{q,\alpha}\left(  -z\right)  =\left(
\mathcal{E}_{q,\alpha}\left(  z\right)  \right)  ^{-1}$ and $\alpha(q,1)=1$
together provides that the odd coefficient of related $q$-Bernoulli numbers
except the first one becomes zero. That means $\mathfrak{B}_{n,q,\alpha}=0$
where $n=2r+1,(r\in%
\mathbb{N}
)$.

\begin{proof}
It follows from the fact that the function
\[
f(t)=%
{\displaystyle\sum\limits_{n=0}^{\infty}}
\frac{\mathfrak{B}_{n,q,\alpha}}{[n]_{q}!}t^{n}-\mathfrak{B}_{1,q,\alpha
}t=\frac{t}{\mathcal{E}_{q,\alpha}\left(  t\right)  -1}+\ \frac{t}{2}%
=\ \frac{t}{2}\left(  \frac{\mathcal{E}_{q}\left(  t\right)  +1}%
{\mathcal{E}_{q}\left(  t\right)  -1}\right)  \ \
\]

is even. we recall that, if \bigskip$\mathcal{E}_{q,\alpha}\left(  -z\right)
=\left(  \mathcal{E}_{q,\alpha}\left(  z\right)  \right)  ^{-1},$ then
$\left(  \text{%
\ref{a2}
}\right)  $ is hold and $\mathfrak{B}_{1,q,\alpha}=-\frac{\alpha(q,2)}%
{\alpha^{2}(q,1)[2]_{q}}.$ Since $\alpha(q,1)=1,$ $\mathfrak{B}_{1,q,\alpha
}=-\frac{1}{2}.$
\end{proof}
\end{lemma}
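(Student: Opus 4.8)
The statement already points to the right strategy, so the plan is to organize it into three short steps. Throughout write $\alpha_{n}=\alpha(q,n)$ and recall $\alpha_{0}=1$.

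\textbf{Step 1 (identify $\mathfrak{B}_{1,q,\alpha}$).} Expanding $\mathcal{E}_{q,\alpha}(t)-1=\alpha_{1}t+\frac{\alpha_{2}}{[2]_{q}!}t^{2}+\cdots$ and inverting this power series, the first two coefficients of $\frac{t}{\mathcal{E}_{q,\alpha}(t)-1}$ give $\mathfrak{B}_{0,q,\alpha}=\alpha_{1}^{-1}$ and $\mathfrak{B}_{1,q,\alpha}=-\frac{\alpha_{2}}{\alpha_{1}^{2}[2]_{q}}$. Now bring in the hypothesis $\mathcal{E}_{q,\alpha}(-z)=\left(\mathcal{E}_{q,\alpha}(z)\right)^{-1}$: by the Proposition above (equivalently, the first line of the system (\ref{a2})) it forces $2\alpha_{2}\alpha_{1}=\binom{2}{1}_{q}=[2]_{q}$, i.e. $\alpha_{2}=[2]_{q}/(2\alpha_{1})$. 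Substituting, and then using the second hypothesis $\alpha_{1}=\alpha(q,1)=1$, gives $\mathfrak{B}_{1,q,\alpha}=-\frac{1}{2}$. This is precisely what validates the second equality in
\[
f(t):=\sum_{n=0}^{\infty}\frac{\mathfrak{B}_{n,q,\alpha}}{[n]_{q}!}t^{n}-\mathfrak{B}_{1,q,\alpha}t=\frac{t}{\mathcal{E}_{q,\alpha}\left(t\right)-1}+\frac{t}{2}.
\]

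\textbf{Step 2 (evenness of $f$).} Using $\mathcal{E}_{q,\alpha}(-t)=1/\mathcal{E}_{q,\alpha}(t)$ I would compute
\[
f(-t)=\frac{-t}{\mathcal{E}_{q,\alpha}(-t)-1}-\frac{t}{2}=\frac{-t\,\mathcal{E}_{q,\alpha}(t)}{1-\mathcal{E}_{q,\alpha}(t)}-\frac{t}{2}=\frac{t\,\mathcal{E}_{q,\alpha}(t)}{\mathcal{E}_{q,\alpha}(t)-1}-\frac{t}{2},
\]
and then the elementary identity $\frac{t\,\mathcal{E}_{q,\alpha}(t)}{\mathcal{E}_{q,\alpha}(t)-1}=t+\frac{t}{\mathcal{E}_{q,\alpha}(t)-1}$ collapses this to $f(-t)=\frac{t}{\mathcal{E}_{q,\alpha}(t)-1}+\frac{t}{2}=f(t)$. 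Since $\mathcal{E}_{q,\alpha}(t)-1$ has a simple zero at $t=0$ (its linear coefficient is $\alpha_{1}=1\neq0$), $f$ is a genuine power series convergent near the origin, so these manipulations are legitimate and $f$ is even.

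\textbf{Step 3 (conclusion).} An even power series has all odd-degree coefficients equal to zero. For $n\geq2$ the coefficient of $t^{n}$ in $f$ is exactly $\mathfrak{B}_{n,q,\alpha}/[n]_{q}!$, since the subtracted term $\mathfrak{B}_{1,q,\alpha}t$ only modifies the $n=1$ coefficient. Hence $\mathfrak{B}_{n,q,\alpha}=0$ for every odd $n=2r+1$ with $r\geq1$, which is the assertion.

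I expect the only delicate point to be the bookkeeping in Step 1: one must correctly extract the first relation of (\ref{a2}) from the Proposition and combine it with $\alpha(q,1)=1$ so that the constant $\mathfrak{B}_{1,q,\alpha}$ really equals $-\frac{1}{2}$ and not some $q$-dependent quantity — if it did not, $f$ would fail to be even. Everything else (the series inversion, the one-line algebraic identity, and the standard fact about even series) is routine. An alternative that bypasses citing (\ref{a2}) is to show directly that $t/(\mathcal{E}_{q,\alpha}(t)-1)+t/2$ is even under $\mathcal{E}_{q,\alpha}(-t)=1/\mathcal{E}_{q,\alpha}(t)$, and then read off both $\mathfrak{B}_{1,q,\alpha}=-\frac{1}{2}$ and the vanishing of all higher odd coefficients in one stroke.
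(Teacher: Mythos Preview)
Your proof is correct and follows essentially the same approach as the paper: first pin down $\mathfrak{B}_{1,q,\alpha}=-\tfrac{1}{2}$ using the first relation of (\ref{a2}) together with $\alpha(q,1)=1$, then show that $f(t)=\frac{t}{\mathcal{E}_{q,\alpha}(t)-1}+\frac{t}{2}$ is an even function under the hypothesis $\mathcal{E}_{q,\alpha}(-t)=1/\mathcal{E}_{q,\alpha}(t)$. The only cosmetic difference is that the paper rewrites $f(t)=\frac{t}{2}\,\frac{\mathcal{E}_{q,\alpha}(t)+1}{\mathcal{E}_{q,\alpha}(t)-1}$ to make the evenness visible, while you verify $f(-t)=f(t)$ by a direct one-line computation; your version is in fact more detailed and fills in steps the paper leaves implicit.
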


\begin{lemma}
\label{L:2}If $\alpha(q,n)$ as a parameter of $\mathcal{E}_{q,\alpha}\left(
z\right)  ,$ satisfy $\frac{\alpha(q,n+1)}{\alpha(q,n)}$ $=$ $%
{\displaystyle\sum\limits_{k=0}^{m}}
a_{k}q^{k},$ Then we have%
\begin{align*}
D_{q,x}\mathfrak{B}_{n,q,\alpha}\left(  x\right)   &  =\left[  n\right]  _{q}%
{\displaystyle\sum\limits_{k=0}^{m}}
a_{k}\mathfrak{B}_{n-1,q,\alpha}\left(  xq^{\frac{k}{n}}\right)
,\ \ D_{q,x}\mathfrak{E}_{n,q,\alpha}\left(  x\right)  =\left[  n\right]  _{q}%
{\displaystyle\sum\limits_{k=0}^{m}}
a_{k}\mathfrak{E}_{n-1,q,\alpha}\left(  xq^{\frac{k}{n}}\right)  \ ,\ \\
\ D_{q,x}\mathfrak{G}_{n,q,\alpha}\left(  x\right)   &  =\left[  n\right]
_{q}%
{\displaystyle\sum\limits_{k=0}^{m}}
a_{k}\mathfrak{G}_{n-1,q,\alpha}\left(  xq^{\frac{k}{n}}\right)  \ .
\end{align*}

\end{lemma}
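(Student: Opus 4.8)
The plan is to differentiate the generating-function definitions of the polynomials term by term and to recycle the computation already performed in the lemma preceding ``Related q-Bernoulli polynomial.'' Concretely, for the Bernoulli case I would start from
\[
\frac{t}{\mathcal{E}_{q,\alpha}\left(  t\right)  -1}\mathcal{E}_{q,\alpha}\left(  tx\right)  =\sum_{n=0}^{\infty}\mathfrak{B}_{n,q,\alpha}\left(  x\right)  \frac{t^{n}}{\left[  n\right]  _{q}!},
\]
which is (\ref{be2}) with $y=0$, and apply $D_{q,x}$ to both sides. On the right-hand side $D_{q,x}$ acts only through the polynomials in $x$, so I need the $q$-derivative of $\mathfrak{B}_{n,q,\alpha}(x)$ directly; but it is cleaner to act on the left, where $D_{q,x}$ hits only the factor $\mathcal{E}_{q,\alpha}(tx)$.

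The key computation is then exactly the identity established in the lemma giving $D_{q}(\mathcal{E}_{q,\alpha}\left(  z\right) )=\sum_{k=0}^{m}a_{k}\mathcal{E}_{q,\alpha}\left(  zq^{k/n}\right)$, applied with $z=tx$: one gets $D_{q,x}\mathcal{E}_{q,\alpha}(tx)=t\sum_{k=0}^{m}a_{k}\mathcal{E}_{q,\alpha}(txq^{k/n})$. Multiplying by the unchanged prefactor $t/(\mathcal{E}_{q,\alpha}(t)-1)$, the left side becomes $\sum_{k=0}^{m}a_{k}\,t\cdot\bigl[\,t/(\mathcal{E}_{q,\alpha}(t)-1)\bigr]\mathcal{E}_{q,\alpha}(txq^{k/n})$, and by the generating function of the polynomials evaluated at the argument $xq^{k/n}$ this equals $\sum_{k=0}^{m}a_{k}\sum_{n=0}^{\infty}\mathfrak{B}_{n,q,\alpha}(xq^{k/n})\,t^{n+1}/[n]_{q}!$. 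Comparing the coefficient of $t^{n}/[n]_{q}!$ on both sides, using $[n]_q!/[n-1]_q!=[n]_q$ and $D_{q,x}\bigl(\sum \mathfrak{B}_{n,q,\alpha}(x)t^n/[n]_q!\bigr)=\sum (D_{q,x}\mathfrak{B}_{n,q,\alpha}(x))t^n/[n]_q!$, yields $D_{q,x}\mathfrak{B}_{n,q,\alpha}(x)=[n]_{q}\sum_{k=0}^{m}a_{k}\mathfrak{B}_{n-1,q,\alpha}(xq^{k/n})$. The Euler and Genocchi cases are identical except that the prefactor is $2/(\mathcal{E}_{q,\alpha}(t)+1)$ or $2t/(\mathcal{E}_{q,\alpha}(t)+1)$, which is again untouched by $D_{q,x}$, so the same manipulation goes through verbatim.

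One subtlety I would be careful about is the exponent $q^{k/n}$: the factor $n$ in the denominator comes from matching $(zq^{k/n})^n=z^nq^k$, so when I substitute $z=tx$ and later read off the coefficient of $t^{n}$ (equivalently of $z^{n}$ after absorbing the extra $t$), the shift in index from $n$ to $n-1$ has to be tracked consistently; writing the intermediate series with a dummy index and only then relabeling avoids an off-by-one error in the power of $q$. I would also note in passing that the hypothesis $\frac{\alpha(q,n+1)}{\alpha(q,n)}=\sum_{k=0}^{m}a_{k}q^{k}$ is what makes the earlier $q$-derivative lemma applicable, so no new convergence or analyticity issue arises beyond what was already assumed. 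The main obstacle is purely bookkeeping — keeping the $q^{k/n}$ arguments and the index shift aligned across the three parallel cases — rather than anything conceptually deep; once the $q$-derivative identity for $\mathcal{E}_{q,\alpha}$ is in hand, the proof is a one-line manipulation of generating functions repeated three times.
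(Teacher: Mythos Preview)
The paper states this lemma without proof (only an empty \texttt{example} environment follows it), so there is no argument in the paper to compare against. Your generating-function approach---apply $D_{q,x}$ to the defining series, invoke the earlier lemma $D_{q}\mathcal{E}_{q,\alpha}(z)=\sum_{k}a_{k}\mathcal{E}_{q,\alpha}(zq^{k/n})$, and match coefficients of $t^{n}$---is exactly the argument the paper's structure sets up and is the intended route; your caution about the index shift and the $q^{k/n}$ exponent (which is already a notational oddity inherited from the paper's earlier lemma, since $n$ is the summation index) is well placed.
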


\begin{example}

\end{example}

\end{document}